\documentclass[oneside,english]{amsart}
\usepackage[T1]{fontenc}
\usepackage[latin9]{inputenc}
\usepackage{textcomp}
\usepackage{mathrsfs}
\usepackage{amsthm}
\usepackage{amssymb}
\usepackage{esint}

\makeatletter
\numberwithin{equation}{section}
\numberwithin{figure}{section}
 \theoremstyle{definition}
 \newtheorem*{defn*}{\protect\definitionname}
\theoremstyle{plain}
\newtheorem{thm}{\protect\theoremname}
  \theoremstyle{plain}
  \newtheorem{lem}[thm]{\protect\lemmaname}
  \theoremstyle{remark}
  \newtheorem*{rem*}{\protect\remarkname}
  \theoremstyle{definition}
  \newtheorem{example}[thm]{\protect\examplename}
  \theoremstyle{remark}
  \newtheorem{rem}[thm]{\protect\remarkname}
  \theoremstyle{plain}
  \newtheorem{cor}[thm]{\protect\corollaryname}

\makeatother

\usepackage{babel}
  \providecommand{\corollaryname}{Corollary}
  \providecommand{\definitionname}{Definition}
  \providecommand{\examplename}{Example}
  \providecommand{\lemmaname}{Lemma}
  \providecommand{\remarkname}{Remark}
\providecommand{\theoremname}{Theorem}

\begin{document}

\title{Extremal Domains for Self-Commutators in the Bergman Space}

\author{Matthew Fleeman and Dmitry Khavinson}
\begin{abstract}
In \cite{Olsen}, the authors have shown that Putnam's inequality
for the norm of self-commutators can be improved by a factor of $\frac{1}{2}$
for Toeplitz operators with analytic symbol $\varphi$ acting on the
Bergman space $A^{2}(\Omega)$. This improved upper bound is sharp
when $\varphi(\Omega)$ is a disk. In this paper we show that disks
are the only domains for which the upper bound is attained.%
\thanks{The authors acknowledge support from the NSF grant DMS - 0855597.%
}
\end{abstract}
\maketitle

\section{Introduction}

Let $H$ be a complex Hilbert Space with inner product $\langle\cdot,\cdot\rangle:H\times H\rightarrow\mathbb{C}$
and $T$ be a bounded linear operator with adjoint $T^{*}$. Assume
$[T^{*},T]:=T^{*}T-TT^{*}\geq0$, i.e. $T$ is hyponormal, then Putnam's
inequality states 
\[
\Vert[T^{*},T]\Vert\leq\frac{Area(sp(T))}{\pi},
\]
where $sp(T)$ denotes the spectrum of $T$ (cf. \cite{Axler}).

In \cite{Khavinson}, it was proved that when $H$ is the Hardy space,
$H^{2}(\Omega)$, or the Smirnov space, $E^{2}(\Omega)$ (cf. \cite[p.2 and p.173]{Duren1}),
where $\Omega$ is a domain bounded by finitely many rectifiable curves,
this inequality is sharp. Indeed, if we take $T=T_{\varphi}$ to be
the Toeplitz operator with symbol $\varphi$ analytic in a neighborhood
of $\overline{\Omega}$, then by the Spectral Mapping Theorem (cf.
\cite[p.263]{Rudin}) $sp(T_{\varphi})=\overline{\varphi(\Omega)}$,
and the following lower bound holds:
\begin{equation}
\Vert[T_{\varphi}^{*},T_{\varphi}]\Vert\geq\frac{4Area^{2}(\varphi(\Omega))}{\Vert\varphi'\Vert_{E^{2}(\Omega)}^{2}\cdot P(\Omega)},\label{eq:khavinson}
\end{equation}
where $P(\Omega)$ denotes the perimeter of $\Omega$. 

Since $[T_{\varphi}^{*},T_{\varphi}]$ is a positive operator, an
interesting consequence follows from (\ref{eq:khavinson}) by setting
$\varphi(z)=z$, so that $\Vert\varphi'\Vert_{E^{2}(\Omega)}^{2}=\Vert1\Vert_{E^{2}(\Omega)}^{2}=P(\Omega)$,
and combining (\ref{eq:khavinson}) with Putnam's inequality, we obtain
\begin{equation}
P^{2}(\Omega)\geq4\pi Area(\Omega),\label{eq:IsoPerimetric}
\end{equation}
which is the classical isoperimetric inequality. The equality in (\ref{eq:IsoPerimetric})
holds if and only if $\Omega$ is a disk.

We are interested in exploring similar questions in a Bergman space
setting. Recall that the Bergman space $A^{2}(\mathbb{D})$ is defined
by:
\[
A^{2}(\mathbb{D}):=\{f\in Hol(\mathbb{D}):\int_{\mathbb{D}}\vert f(z)\vert^{2}dA(z)<\infty\},
\]
where $dA$ denotes the area measure on $\mathbb{D}$. $A^{2}(\Omega)$
is defined accordingly (cf. \cite{Duren2}). The orthogonal projection
from $L^{2}(\mathbb{D},dA)=L^{2}(\mathbb{D})$ onto $A^{2}(\mathbb{D})$
is called the Bergman projection and has integral representation
\[
Pf(z)=\int_{\mathbb{D}}\frac{f(\omega)}{(1-\bar{\omega}z)^{2}}dA(\omega).
\]

Recently, Bell, Ferguson, and Lundberg in \cite{Bell} obtained a
different lower bound for $[T_{\varphi}^{*},T_{\varphi}]$ when $T_{\varphi}$
acts on the Bergman space $A^{2}(\Omega)$. This lower bound turned
out to be connected with the torsional rigidity of $\Omega$ (cf.
\cite[p.2]{Polya}). Intuitively, if we imagine a cylindrical object
with cross-section $\Omega$, then the torsional rigidity quantifies
the resistance to twisting. There are several equivalent definitions
(cf. \cite[pp.87-89]{Polya}). The one used in \cite{Bell} and \cite{Olsen}
is the following:
\begin{defn*}
If $\Omega$ is a simply connected domain, the torsional rigidity
$\rho=\rho(\Omega)$ is 
\[
\rho=2\int_{\Omega}\nu,
\]
where $\nu$ is the unique solution to the Dirichlet problem
\[
\begin{cases}
\Delta\nu & =-2\\
\nu\vert_{\partial\Omega} & =\;0
\end{cases}.
\]

\end{defn*}
In \cite{Bell} it is shown that for $T_{z}$ acting on $A^{2}(\Omega)$,
\[
\Vert[T_{z}^{*},T_{z}]\Vert\geq\frac{\rho(\Omega)}{Area(\Omega)}.
\]
The authors also conjectured that in the Bergman space setting Putnam's
inequality could be improved by a factor of $\frac{1}{2}$. This conjecture
was recently proven by Olsen and Reguera in \cite{Olsen}. Combined
with the lower bound given by Bell, Ferguson, and Lundberg, this yields
a new proof of the St. Venant inequality 
\[
\rho(\Omega)\leq\frac{Area(\Omega)^{2}}{2\pi}.
\]

In this note we show that in the Bergman space setting disks are the
only extremal domains for which $\Vert T_{z}^{*},T_{z}\Vert$ achieves
it's maximal bound of $\frac{Area(\Omega)}{2\pi}$. More precisely,
in section 2, we present a simple argument illustrating that the upper
bound in Putnam's inequality can only be attained when $\Omega$ is
a disk in any Hilbert space, while in the Bergman space setting $\Vert T_{\varphi}^{*},T_{\varphi}\Vert=\frac{Area(\varphi(\Omega))}{2\pi}=\frac{1}{2}$
when $\varphi(\Omega)=\mathbb{D}$, the unit disk. In section 3, we
give a sketch of Olsen and Reguera's proof of the improved upper bound
in the Bergman setting. This is needed for our argument in section
4 where we show that the upper bound for $\Vert T_{\varphi}^{*},T_{\varphi}\Vert$
is achieved if and only if $\varphi(\Omega)$ is a disk. This gives
another proof of the well known fact that St. Venant's inequality
becomes equality only for disks.

\section{Non-sharpness of Putnam's Inequality}

In this section, we illustrate why Putnam's inequality is not sharp
in a Bergman space setting. We start with the following elementary
Lemma found in \cite[p.13]{Duren2}.
\begin{lem}
Suppose $\omega=\varphi(z)$ maps a domain $D$ conformally onto a
domain $\Omega$. Then the linear map $T(f)=g$ defined by 
\[
g(z)=f(\varphi(z))\varphi'(z)
\]
defines an isometry of $A^{2}(\Omega)$ onto $A^{2}(D)$.\end{lem}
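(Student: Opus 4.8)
The plan is to verify directly that the stated map is a well-defined isometry between the two Bergman spaces, using the change of variables formula for area integrals under a conformal map. Since $\varphi$ is a conformal bijection from $D$ onto $\Omega$, its Jacobian as a real map $\mathbb{R}^2\to\mathbb{R}^2$ is $|\varphi'(z)|^2$, so $dA(\omega)=|\varphi'(z)|^2\,dA(z)$ when $\omega=\varphi(z)$. I would first confirm that $T$ maps holomorphic functions to holomorphic functions: if $f\in Hol(\Omega)$, then $f\circ\varphi$ is holomorphic on $D$, and multiplying by the holomorphic factor $\varphi'$ keeps $g=(f\circ\varphi)\varphi'$ holomorphic on $D$.

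The core computation is the norm identity. For $f\in A^2(\Omega)$ I would write
\[
\Vert f\Vert_{A^2(\Omega)}^2=\int_\Omega |f(\omega)|^2\,dA(\omega),
\]
apply the substitution $\omega=\varphi(z)$, which carries $\Omega$ back to $D$ bijectively, and obtain
\[
\int_\Omega |f(\omega)|^2\,dA(\omega)=\int_D |f(\varphi(z))|^2\,|\varphi'(z)|^2\,dA(z)=\int_D |f(\varphi(z))\varphi'(z)|^2\,dA(z)=\Vert g\Vert_{A^2(D)}^2.
\]
This simultaneously shows that $g\in A^2(D)$ whenever $f\in A^2(\Omega)$ and that $T$ preserves norms, hence $T$ is an isometry onto its image. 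Linearity of $T$ is immediate from the definitions.

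To finish, I would establish surjectivity. Since $\varphi$ is a conformal bijection it has a holomorphic inverse $\psi=\varphi^{-1}:\Omega\to D$, and one checks that the analogous map $S(g)(\omega)=g(\psi(\omega))\psi'(\omega)$ sends $A^2(D)$ into $A^2(\Omega)$ and is a two-sided inverse to $T$: composing the chain rule $\psi'(\varphi(z))\varphi'(z)=1$ gives $S\circ T=\mathrm{id}$ and $T\circ S=\mathrm{id}$. Hence $T$ is a bijective isometry of $A^2(\Omega)$ onto $A^2(D)$. There is no real obstacle here; the only point requiring a little care is the correct bookkeeping of the Jacobian factor $|\varphi'|^2$ and the placement of the holomorphic weight $\varphi'$ so that the absolute value squared matches the change of variables exactly, which is precisely what makes $T$ an isometry rather than merely bounded.
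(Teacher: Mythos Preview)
Your proof is correct and follows essentially the same route as the paper: the change-of-variables formula with Jacobian $|\varphi'|^2$ gives the isometry, and surjectivity is obtained by exhibiting the inverse map $S(g)=(g\circ\psi)\psi'$ via the chain rule $\psi'(\varphi(z))\varphi'(z)=1$. The only difference is cosmetic---you add a line about holomorphicity and linearity that the paper leaves implicit.
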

\begin{proof}
That $T$ is an isometry is clear from the fact that 
\[
\int_{\Omega}\vert f(w)\vert^{2}dA(\omega)=\int_{D}\vert f(\varphi(z))\vert^{2}\vert\varphi'(z)\vert^{2}dA(z),
\]
where $\vert\varphi'(z)\vert^{2}$ is the Jacobian of the conformal
map $\varphi.$

To see that $T$ is onto, let $g\in A^{2}(D)$ and let $z=\psi(w)$
be the inverse mapping. Then $f(\omega)=g(\psi(\omega))\psi'(\omega)$
is in $A^{2}(\Omega)$ and $T(f)=g$ since
\[
T(f)=f(\varphi(z))\varphi'(z)=g(\psi(\varphi(z))\varphi'(\psi(\omega))\psi'(\omega),
\]
and we can write $\psi'(\omega)=\frac{1}{\varphi'(\psi(\omega))}$,
which is well defined on $D$ because $\varphi'\vert_{D}\neq0$. So
$T(f)=g$ and $T$ is onto as claimed.
\end{proof}
The following statement is now straightforward.
\begin{thm}
Suppose $\Omega$ is a bounded Jordan domain and $\varphi:\mathbb{D}\rightarrow\Omega$
is a conformal mapping. Then 
\[
\Vert[T_{\varphi}^{*},T_{\varphi}]\Vert_{A^{2}(\mathbb{D})\rightarrow A^{2}(\mathbb{D})}=\Vert[T_{z}^{*},T_{z}]\Vert_{A^{2}(\Omega)\rightarrow A^{2}(\Omega)}.
\]
\end{thm}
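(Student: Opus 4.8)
The plan is to show that the two self-commutators are \emph{unitarily equivalent}, which immediately forces their operator norms to coincide. The unitary intertwining them is precisely the isometry $U$ of Lemma 1, given by $Uf=(f\circ\varphi)\,\varphi'$. Since Lemma 1 asserts that $U$ is an isometry of $A^{2}(\Omega)$ \emph{onto} $A^{2}(\mathbb{D})$, it is a surjective linear isometry between Hilbert spaces, hence unitary, so that $U^{-1}=U^{*}$.

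First I would reduce both Toeplitz operators to multiplication operators. Because $\Omega$ is bounded, the coordinate function $z$ is bounded on $\Omega$, so $z\cdot f\in A^{2}(\Omega)$ whenever $f\in A^{2}(\Omega)$; as $zf$ is already analytic, the Bergman projection acts trivially and $T_{z}f=zf$. Likewise $\varphi$ is bounded on $\mathbb{D}$ (its image is the bounded domain $\Omega$), so $T_{\varphi}f=\varphi f$ for $f\in A^{2}(\mathbb{D})$. Thus each Toeplitz operator is simply multiplication by its symbol.

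The key step is the intertwining identity $U T_{z}=T_{\varphi}U$, which is a one-line direct computation: for $f\in A^{2}(\Omega)$ and $w\in\mathbb{D}$,
\[
(U T_{z}f)(w)=\big((zf)\circ\varphi\big)(w)\,\varphi'(w)=\varphi(w)\,f(\varphi(w))\,\varphi'(w)=\varphi(w)\,(Uf)(w)=(T_{\varphi}U f)(w).
\]
Hence $T_{\varphi}=U T_{z}U^{-1}$, and taking adjoints (using $U^{*}=U^{-1}$) gives $T_{\varphi}^{*}=U T_{z}^{*}U^{-1}$. Substituting these into the self-commutator yields
\[
[T_{\varphi}^{*},T_{\varphi}]=U\,[T_{z}^{*},T_{z}]\,U^{-1},
\]
so the two self-commutators are unitarily equivalent.

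Finally, since conjugation by a unitary preserves the operator norm, one concludes $\Vert[T_{\varphi}^{*},T_{\varphi}]\Vert=\Vert[T_{z}^{*},T_{z}]\Vert$, as claimed. There is no substantive obstacle here; the only points requiring care are the observation that surjectivity upgrades the isometry of Lemma 1 to a genuine unitary (so that the adjoints intertwine correctly), and the verification that these particular Toeplitz operators collapse to multiplication operators, which is what makes the intertwining computation go through cleanly.
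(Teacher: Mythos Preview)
Your proof is correct, and indeed cleaner than the paper's. Both proofs ultimately rest on the surjective isometry $U$ of Lemma~1, but they use it differently.

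The paper first unpacks the self-commutator norm as the variational quantity
\[
\Vert[T_{\varphi}^{*},T_{\varphi}]\Vert=\sup_{g\in A_{1}^{2}(\mathbb{D})}\ \inf_{f\in A^{2}(\mathbb{D})}\Vert\bar{\varphi}g-f\Vert_{L^{2}(\mathbb{D})}^{2},
\]
then performs an explicit change of variables $\omega=\varphi(z)$ inside the integral and invokes Lemma~1 to identify the resulting sup--inf over $A^{2}(\Omega)$ with $\Vert[T_{z}^{*},T_{z}]\Vert$. You instead observe the intertwining relation $UT_{z}=T_{\varphi}U$ at the operator level, which yields $[T_{\varphi}^{*},T_{\varphi}]=U\,[T_{z}^{*},T_{z}]\,U^{-1}$ and hence equality of norms immediately. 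Your route is more conceptual and avoids the sup--inf machinery entirely; it also gives the stronger conclusion that the two self-commutators are unitarily equivalent, not merely equinormed. The paper's route, on the other hand, has the incidental benefit that it records the variational formula for $\Vert[T_{\varphi}^{*},T_{\varphi}]\Vert$ explicitly, and the authors rely on that formula again in the proofs of Theorem~3 and Theorem~7. So while your argument fully proves the stated theorem, be aware that the paper is also harvesting a byproduct from its longer computation.
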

\begin{proof}
We start with the following straightforward calculation (cf. \cite{Axler}).
If we take $A_{1}^{2}(\mathbb{D})$ to be the unit ball of $A^{2}(\mathbb{D})$,
we have that
\[
\Vert[T_{\varphi}^{*},T_{\varphi}]\Vert=\sup_{f\in A_{1}^{2}(\mathbb{D})}\langle[T_{\varphi}^{*},T_{\varphi}]f,f\rangle
\]
\[
=\sup_{f\in A_{1}^{2}(\mathbb{D})}\left(\Vert T_{\varphi}f\Vert_{A^{2}(\mathbb{D})}^{2}-\Vert T_{\varphi}^{*}f\Vert_{A^{2}(\mathbb{D})}^{2}\right)
\]
\[
=\sup_{f\in A_{1}^{2}(\mathbb{D})}\left(\Vert\varphi f\Vert_{A^{2}(\mathbb{D})}^{2}-\Vert P(\bar{\varphi}f)\Vert_{A^{2}(\mathbb{D})}^{2}\right)
\]
\[
=\sup_{f\in A_{1}^{2}(\mathbb{D})}\left(\Vert\varphi f\Vert_{L^{2}(\mathbb{D})}^{2}-\Vert P(\bar{\varphi}f)\Vert_{A^{2}(\mathbb{D})}^{2}\right).
\]
Thus we have that 
\[
\Vert[T_{\varphi}^{*},T_{\varphi}]\Vert=\sup_{g\in A_{1}^{2}(\mathbb{D})}\left(\Vert\varphi f\Vert_{L^{2}(\mathbb{D})}^{2}-\Vert P(\bar{\varphi}g)\Vert_{A^{2}(\mathbb{D})}^{2}\right)
\]
\[
=\sup_{g\in A_{1}^{2}(\mathbb{D})}\{\inf_{f\in A^{2}(\mathbb{D})}\{\Vert\bar{\varphi}g-f\Vert_{L^{2}(\mathbb{D})}^{2}\}\}.
\]
Fixing $f,g\in A^{2}(\mathbb{D})$, with $g\in A_{1}^{2}(\mathbb{D})$,
and letting $\psi=\varphi^{-1}$, we see that 
\[
\Vert\bar{\varphi}g-f\Vert_{L^{2}(\mathbb{D})}^{2}=\int_{\mathbb{D}}\vert\bar{\varphi}g-f\vert^{2}dA
\]
\[
=\int_{\Omega}\vert\bar{z}g(\psi(\omega))-f(\psi(\omega))\vert^{2}\vert\psi'(\omega)\vert^{2}dA(\omega)
\]
\[
=\int_{\Omega}\vert\bar{z}g(\psi(\omega))\psi'(\omega)-f(\psi(\omega))\psi'(\omega)\vert^{2}dA(\omega).
\]

By Lemma 1, $T(f)=f(\psi(\omega))\psi'(\omega)$ is a surjective isometry
from $A^{2}(\mathbb{D})$ onto $A^{2}(\Omega)$. So, we have that
\[
\sup_{g\in A_{1}^{2}(\mathbb{D})}\{\inf_{f\in A^{2}(\mathbb{D})}\{\Vert\bar{\varphi}g-f\Vert_{L^{2}(\mathbb{D})}^{2}\}\}=\sup_{g\in A_{1}^{2}(\Omega)}\{\inf_{f\in A^{2}(\Omega)}\{\Vert\bar{z}g-f\Vert_{L^{2}(\Omega)}^{2}\}\},
\]
and the proof is complete. 
\end{proof}
This leads to the following interesting observation.
\begin{thm}
\label{thm:PutEqDisks}Let $\varphi$ and $\Omega$ be as in Theorem
2. Then $\Vert[T_{\varphi}^{*},T_{\varphi}]\Vert$ can only achieve
the upper bound stated in Putnam's inequality (cf. \cite{Putnam})
if $\varphi(\mathbb{D})$ is a disk.\end{thm}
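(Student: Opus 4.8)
The plan is to use the isometric reduction of Theorem 2 and then play the available upper bounds against one another. By Theorem 2 it suffices to study $\Vert[T_z^*,T_z]\Vert_{A^2(\Omega)}$, and from the computation in the proof of Theorem 2 one has the variational description
\[
\Vert[T_z^*,T_z]\Vert=\sup_{\|g\|_{A^2(\Omega)}=1}\mathrm{dist}_{L^2(\Omega)}(\bar z g,A^2(\Omega))^2 .
\]
Because $sp(T_\varphi)=\overline{\varphi(\mathbb{D})}=\overline{\Omega}$, Putnam's inequality reads $\Vert[T_\varphi^*,T_\varphi]\Vert\le Area(\Omega)/\pi$. Alongside it I would invoke the sharper bound of Olsen and Reguera, $\Vert[T_\varphi^*,T_\varphi]\Vert\le Area(\Omega)/(2\pi)$, which is quoted in the introduction from \cite{Olsen} and may therefore be taken as known at this stage.

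First I would extract the natural lower bound from the single competitor $g\equiv Area(\Omega)^{-1/2}$. Since $\bar\partial(\bar z g)=g$, the inner distance equals the minimal $L^2$-norm of a solution of $\bar\partial u=g$; for such $g$ this minimal solution is a constant multiple of $\partial\nu$, where $\nu$ is the torsion function of the definition above, and an integration by parts using $\nu|_{\partial\Omega}=0$ gives $\|u\|^2=\rho(\Omega)/Area(\Omega)$. This reproduces the Bell--Ferguson--Lundberg estimate $\Vert[T_z^*,T_z]\Vert\ge\rho(\Omega)/Area(\Omega)$ from \cite{Bell}, and for a disk of radius $R$ it already evaluates to $R^{2}/2$, which is $Area(\Omega)/(2\pi)$. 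The point to highlight is the resulting factor of two: for every such $\Omega$ one has
\[
\Vert[T_\varphi^*,T_\varphi]\Vert\le\frac{Area(\Omega)}{2\pi}<\frac{Area(\Omega)}{\pi},
\]
the strict inequality holding because $Area(\Omega)>0$. Hence the Putnam bound is attained by no Jordan domain at all, which is exactly the non-sharpness advertised by the title of this section, while the disk is singled out as the unique domain for which $\Vert[T_\varphi^*,T_\varphi]\Vert$ meets the \emph{sharp} bound $Area(\Omega)/(2\pi)$.

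The substantive content therefore shifts to the equality case of the improved bound, and this is the step I expect to be the main obstacle. The plan is to show that equality $\Vert[T_\varphi^*,T_\varphi]\Vert=Area(\Omega)/(2\pi)$ forces the extremal $g$ and the associated minimal $\bar\partial$-solution to saturate the estimate pointwise, which converts into an overdetermined condition for the torsion function $\nu$ (a constant normal derivative along $\partial\Omega$ in addition to $\nu|_{\partial\Omega}=0$); by the classical Serrin--Weinberger rigidity for such overdetermined problems this can only occur when $\partial\Omega$ is a circle, i.e. when $\Omega$ is a disk. Carrying out this rigidity carefully -- isolating the correct extremal functional and tracking equality through each intermediate estimate -- is the crux, and it is precisely the analysis developed in the later sections; Theorem 3 itself, concerning the original Putnam bound $Area(\Omega)/\pi$, then follows at once from the factor-of-two gap recorded above.
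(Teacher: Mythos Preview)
Your argument for Theorem~3 is correct, but it proceeds along a different route from the paper. You invoke the Olsen--Reguera bound $\Vert[T_\varphi^*,T_\varphi]\Vert\le Area(\Omega)/(2\pi)$ as a black box, note that this is strictly less than the Putnam bound $Area(\Omega)/\pi$, and conclude that the Putnam bound is attained for no Jordan domain, so that the implication in Theorem~3 holds vacuously. The paper instead bounds $\inf_{f}\Vert\bar z g-f\Vert_{L^2(\Omega)}^2$ above by $\inf_{h\in R(\overline\Omega)}\Vert\bar z-h\Vert_{L^\infty(\Omega)}^2$ (factoring $g$ out and using density of polynomials), and then applies Alexander's theorem, which says this last quantity is at most $Area(\Omega)/\pi$ with equality precisely for disks. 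Your route is quicker but leans on a substantially harder external result; the paper's Alexander argument is more elementary and, as the Remark following the proof notes, extends verbatim to any subnormal operator on any Hilbert space, whereas the Olsen--Reguera improvement is specific to the Bergman setting. Your intermediate detour through the torsion function and the Bell--Ferguson--Lundberg lower bound is not needed for Theorem~3 at all.

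Your second half, sketching a Serrin--Weinberger approach to the equality case of the \emph{improved} bound $Area(\Omega)/(2\pi)$, is addressing a different statement (the paper's Theorem~7 and Corollary~8, not Theorem~3), and there is a genuine gap in the plan as stated. Equality $\Vert[T_z^*,T_z]\Vert=Area(\Omega)/(2\pi)$ does not by itself force equality in the Bell--Ferguson--Lundberg lower bound, because the latter arises from the single competitor $g=\mathrm{const}$ while the supremum may well be attained elsewhere; you would first have to prove that the extremal $g$ is constant before any overdetermined problem for $\nu$ appears. The paper avoids this entirely: it tracks equality directly through the arithmetic--geometric inequalities in the Olsen--Reguera computation and obtains an infinite system on the Taylor coefficients of $\varphi$ that forces $\varphi(\mathbb D)$ to be a disk.
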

\begin{proof}
We argue as follows. Let $A_{1}^{2}(\Omega)$ be the unit ball in
$A^{2}(\Omega)$. By Theorem 2, we have that
\[
\Vert[T_{\varphi}^{*},T_{\varphi}]\Vert_{A^{2}(\mathbb{D})\rightarrow A^{2}(\mathbb{D})}=\Vert[T_{z}^{*},T_{z}]\Vert_{A^{2}(\Omega)\rightarrow A^{2}(\Omega)}=\sup_{g\in A_{1}^{2}(\Omega)}\{\inf_{f\in A^{2}(\Omega)}\{\Vert\overline{z}g-f\Vert_{A^{2}(\Omega)}^{2}\}\}.
\]
Fix $g\in A_{1}^{2}(\Omega)$, we have 
\[
\inf_{f\in A^{2}(\Omega)}\Vert\bar{z}g-f\Vert_{L^{2}(\Omega)}^{2}=\inf_{f\in A^{2}(\Omega)}\int_{\Omega}\vert\bar{z}g-f\vert^{2}dA
\]
 
\[
\leq\inf_{h:\, gh\in A^{2}(\Omega)}\int_{\Omega}\vert\bar{z}-f\vert^{2}\vert g\vert^{2}dA\leq\inf_{h:\, gh\in A^{2}(\Omega)}\Vert\bar{z}-h\Vert_{\infty}^{2}
\]
since $g\in A_{1}^{2}(\Omega)$. Further, since the polynomials $\mathcal{P}$
are dense in $H^{\infty}(\Omega)$ for any bounded Jordan domain $\Omega$,
and since for all $g\in A_{1}^{2}(\Omega)$, and all $p\in\mathcal{P}$,
we have that $gp\in A^{2}(\Omega)$, we obtain from the last inequality
that 
\[
\inf_{f\in A^{2}(\Omega)}\Vert\bar{z}g-f\Vert_{L^{2}(\Omega)}^{2}\leq\inf_{h\in R(\overline{\Omega})}\Vert\bar{z}-h\Vert_{L^{\infty}(\Omega)}^{2}
\]
where $R(\overline{\Omega})$ is the uniform closure of the algebra
of rational functions in $\Omega$ with poles outside $\overline{\Omega}$.
In \cite{Alexander}, Alexander proved that 
\[
\inf_{f\in R(\overline{\Omega})}\Vert\bar{z}-f\Vert_{L^{\infty}(\Omega)}\leq\sqrt{\frac{Area(\Omega)}{\pi}},
\]
and further that equality is achieved if, and only if, $\Omega$ is
a disk (cf. \cite{Axler,Gamelin}). The theorem now immediately follows.\end{proof}
\begin{rem*}
If we take $H$ to be any Hilbert space and $T$ to be any subnormal
operator with a rationally cyclic vector, then there is a positive
finite Borel measure $\mu$ on $sp(T)$ such that $T$ is unitarily
equivalent to multiplication by $z$ on $R^{2}(sp(T),\,\mu)$ which
is the closure of $R(sp(T))$ in $L^{2}(sp(T),\,\mu)$ (cf. \cite{Axler}).
From this, repeating the above argument word for word, we obtain that
if 
\[
\Vert[T^{*},T]\Vert=\frac{Area(sp(T))}{\pi},
\]
then $sp(T)$ must be a disk. The case when $T$ does not have a rationally
cyclic vector follows from the above case as in \cite{Axler}, so
that the above theorem extends to all Hilbert spaces and any subnormal
operator $T$. 
\end{rem*}
The following example shows that the converse fails, and in particular
fails for Bergman spaces.
\begin{example}
Let $\varphi(z)=z^{k}$ for some $k\in\mathbb{N}$, and let $T_{\varphi}:A^{2}(\mathbb{D})\rightarrow A^{2}(\mathbb{D})$,
and recall that $P:L^{2}(\mathbb{D})\rightarrow A^{2}(\mathbb{D})$
is the orthogonal projection of $L^{2}(\mathbb{D})$ onto $A^{2}(\mathbb{D})$.
As we showed in Theorem 2, 
\[
\Vert[T_{\varphi}^{*},T_{\varphi}]\Vert=\sup_{g\in A_{1}^{2}(\mathbb{D})}\left(\Vert\varphi g\Vert_{L^{2}(\mathbb{D})}^{2}-\Vert P(\bar{\varphi}g)\Vert_{A^{2}(\mathbb{D})}^{2}\right).
\]
Let $\psi_{n}(z)=(\frac{n+1}{\pi})^{\frac{1}{2}}z^{n}$, where $n=0,1,2,\ldots$.
The collection $\{\psi_{n}(z)\}_{n=0}^{\infty}$ forms an orthonormal
basis for $A^{2}(\mathbb{D})$ (cf. \cite[p. 11]{Duren2}). For $g\in A_{1}^{2}(\mathbb{D}),$
we may write
\[
g(z)=\sum_{n=0}^{\infty}\hat{g}(n)\psi_{n}(z),
\]
where $\hat{g}(n):=\langle g,\psi_{n}\rangle$ and $\sum_{n=0}^{\infty}\vert\hat{g}(n)\vert^{2}=1$.
Since we have an orthonormal basis at hand, we may calculate $P(\bar{\varphi}g)$
explicitly. 
\[
P(\bar{z}^{k}g)=\sum_{n=0}^{\infty}\langle\bar{z}^{k}g,\psi_{n}\rangle\psi_{n}.
\]
Calculating $\langle\bar{z}^{k}g,\psi_{n}\rangle$, we find that 
\[
\langle\bar{z}^{k}g,\psi_{n}\rangle=\langle\bar{z}^{k}\sum_{m=0}^{\infty}\hat{g}(m)\psi_{m},\psi_{n}\rangle,
\]
where 
\[
\langle\bar{z}^{k}\hat{g}(m)\psi_{m},\psi_{n}\rangle=\int_{\mathbb{D}}\frac{\sqrt{(m+1)(n+1)}}{\pi}\hat{g}(m)z^{m}\bar{z}^{n+k}dA
\]
\[
=\frac{\sqrt{(m+1)(n+1)}}{\pi}\hat{g}(m)\frac{2\pi}{m+n+k+2}\delta_{m,n+k}.
\]
Where $\delta_{i,j}$ is the Kronecker symbol. Thus, 
\[
\langle\bar{z}^{k}g,\psi_{n}\rangle=\left(\frac{n+1}{n+k+1}\right)^{\frac{1}{2}}\hat{g}(n+k),
\]
and so we obtain that 
\begin{equation}
\Vert P(\bar{z}^{k}g)\Vert_{A^{2}(\mathbb{D})}^{2}=\sum_{n=0}^{\infty}\frac{n+1}{n+k+1}\vert\hat{g}(n+k)\vert^{2}.\label{eq:example1}
\end{equation}
Similarly, when we calculate $\Vert z^{k}g\Vert_{A^{2}(\mathbb{D})}^{2}$,
we find that
\[
\langle z^{k}\hat{g}(m)\psi_{m},\psi_{n}\rangle=\int_{\mathbb{D}}\frac{\sqrt{(m+1)(n+1)}}{\pi}\hat{g}(m)z^{m+k}\bar{z}^{n}dA
\]
\[
=\frac{\sqrt{(m+1)(n+1)}}{\pi}\hat{g}(m)\frac{2\pi}{m+n+k+2}\delta_{m+k,n}.
\]
Thus
\[
\langle z^{k}g,\psi_{n}\rangle=\begin{cases}
\sqrt{\frac{n-k+1}{n+1}}\hat{g}(n-k) & n\geq k\\
0 & n<k
\end{cases}.
\]
Hence, 
\begin{equation}
\Vert z^{k}g\Vert_{L^{2}(\mathbb{D})}^{2}=\sum_{n=k}^{\infty}\frac{n-k+1}{n+1}\vert\hat{g}(n-k)\vert^{2}.\label{eq:example2}
\end{equation}
Combining (\ref{eq:example1}) and (\ref{eq:example2}), we obtain
that
\[
\Vert[T_{\varphi}^{*},T_{\varphi}]\Vert=\sup_{g\in A_{1}^{2}(\mathbb{D})}\{\sum_{n=k}^{\infty}\frac{n-k+1}{n+1}\vert\hat{g}(n-k)\vert^{2}-\sum_{n=0}^{\infty}\frac{n+1}{n+k+1}\vert\hat{g}(n+k)\vert^{2}\}
\]
\[
\sup_{g\in A_{1}^{2}(\mathbb{D})}\{\sum_{n=0}^{k-1}\frac{n+1}{n+k+1}\vert\hat{g}(n)\vert^{2}+\sum_{n=k}^{\infty}(\frac{n+1}{n+k+1}-\frac{n-k+1}{n+1})\vert\hat{g}(n)\vert^{2}\}
\]
\[
\leq\sup_{g\in A_{1}^{2}(\mathbb{D})}\{\sum_{n=0}^{k-1}\frac{n+1}{n+k+1}\vert\hat{g}(n)\vert^{2}+\sum_{n=k}^{\infty}\frac{k}{n+k+1}\vert\hat{g}(n)\vert^{2}\}
\]
since
\[
\frac{n+1}{n+k+1}-\frac{n-k+1}{n+1}\leq\frac{n+1}{n+k+1}-\frac{n-k+1}{n+k+1}=\frac{k}{n+k+1},\qquad k\geq0.
\]
Further, since$\frac{n+1}{n+k+1}\leq\frac{k}{2k}$ for $0\leq n\leq k-1$,
we obtain that 
\[
\Vert[T_{\varphi}^{*},T_{\varphi}]\Vert\leq\sup_{g\in A_{1}^{2}(\mathbb{D})}\frac{k}{2k}\sum_{n=0}^{\infty}\vert\hat{g}(n)\vert^{2}=\frac{1}{2}.
\]
This upper bound is achieved if we take $g=\psi_{k-1}$, so that $\Vert[T_{\varphi}^{*},T_{\varphi}]\Vert=\frac{1}{2}$,
whenever $\varphi(z)=z^{k}$ for any $k\in\mathbb{N}$. Thus, we see
that the converse to Theorem \ref{thm:PutEqDisks} fails. 

This calculation, independently done by T. Ferguson, leads to the
conjecture, following Bell et. al. that in the Bergman space setting,
Putnam's inequality can be improved by a factor of $\frac{1}{2}$.
This conjecture was recently proven by Olsen and Reguera in \cite{Olsen}.
In the following section we give a sketch of their proof, which will
be needed in §4.
\end{example}

\section{Sketch of the Olsen and Reguera proof}

In their paper, Olsen and Reguera worked with the Hankel operator
on $A^{2}(\mathbb{D})$ with symbol $\varphi\in L^{2}(\mathbb{D})$
defined by 
\[
H_{\varphi}(f):=(I-P)(\varphi f),\qquad f\in A^{2}(\mathbb{D}).
\]
They then proved the following theorem.
\begin{thm}
\label{thmOlsen-Reguera}Let $\varphi\in A^{2}(\mathbb{D})$ be in
the Dirichlet space $\mathscr{D}$, i.e $\varphi'\in A^{2}(\mathbb{D})$.
Then 
\[
\Vert H_{\bar{\varphi}}\Vert\leq\frac{1}{\sqrt{2}}\Vert\varphi'\Vert_{A^{2}(\mathbb{D})}.
\]
\end{thm}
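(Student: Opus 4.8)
The plan is to bound the quantity $\|(I-P)(\bar\varphi f)\|_{L^2(\mathbb{D})}^2$ over the unit ball of $A^2(\mathbb{D})$, which is exactly the expression met in the proof of Theorem 2. Since $I-P$ is an orthogonal projection, I would first record
\[
\|H_{\bar\varphi}f\|^2=\|\bar\varphi f\|_{L^2(\mathbb{D})}^2-\|P(\bar\varphi f)\|_{A^2(\mathbb{D})}^2=\langle[T_\varphi^*,T_\varphi]f,f\rangle,
\]
so that $\|H_{\bar\varphi}\|^2=\|[T_\varphi^*,T_\varphi]\|$ and the asserted inequality is precisely the factor-$\tfrac12$ improvement of Putnam's bound. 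Because $\varphi'$ is unaffected by adding a constant to $\varphi$ and $\langle f,g\rangle=0$ for every $g\perp A^2$, I may assume $\varphi(0)=0$ and expand $\varphi=\sum_{k\ge1}a_kz^k$, $f=\sum_{m\ge0}b_mz^m$; in the normalization of area measure for which $\{\sqrt{n+1}\,z^n\}_{n\ge0}$ is orthonormal one has $\|\varphi'\|_{A^2(\mathbb{D})}^2=\sum_{k\ge1}k|a_k|^2$, so the goal is the operator inequality $H_{\bar\varphi}^*H_{\bar\varphi}\le\tfrac12\big(\sum_k k|a_k|^2\big)I$.

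The main computation I would carry out is an exact evaluation of $\|H_{\bar\varphi}f\|^2$ in this basis. The monomials $z^m\bar z^k$ are mutually orthogonal across the frequency $j:=m-k$, so $\bar\varphi f=\sum_j F_j$ with the $F_j$ orthogonal and $\|H_{\bar\varphi}f\|^2=\sum_j\|(I-P)F_j\|^2$. Using $P(z^m\bar z^k)=\tfrac{m-k+1}{m+1}\,z^{m-k}$ for $m\ge k$ and $0$ otherwise -- the identity already isolated in the Example -- each band collapses to a Hilbert-matrix-type quadratic form with kernel $\tfrac{1}{k+k'+j+1}$ in the numbers $\bar a_k b_{k+j}$. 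When $\varphi$ is a single monomial this is the Example verbatim, and there the elementary estimates $\tfrac{n+1}{n+k+1}-\tfrac{n-k+1}{n+1}\le\tfrac{k}{n+k+1}$ and $\tfrac{n+1}{n+k+1}\le\tfrac12$ already yield the constant $\tfrac12$. Hence everything beyond the Example lives in the off-diagonal products $a_k\overline{a_{k'}}$ with $k\ne k'$.

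The hard part will be showing that these off-diagonal contributions never raise the bound above the diagonal value $\tfrac12\sum_k k|a_k|^2$. I would attack each band's quadratic form with Cauchy--Schwarz in the $k$-summation so as to dominate it by its diagonal entries, and then resum over $j$; the extremal pair $\varphi(z)=z$, $f\equiv1$, for which $\|H_{\bar z}(1)\|^2=\tfrac12=\tfrac12\|\varphi'\|^2$, should indicate the correct grouping. Matching the \emph{sharp} constant $\tfrac1{\sqrt2}$ rather than some larger absolute constant is the delicate point, because a crude termwise majorization of the Hilbert-type kernel loses it.

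To organize the estimate -- and to see why $\varphi'$ enters at all -- I would keep in mind the $\bar\partial$ interpretation: since $P(\bar\varphi f)$ is holomorphic, $(I-P)(\bar\varphi f)$ is a solution of $\partial_{\bar z}u=\overline{\varphi'}\,f$, and being orthogonal to $\ker\partial_{\bar z}=A^2(\mathbb{D})$ it is the \emph{minimal}-norm solution. Thus $\|H_{\bar\varphi}f\|=\|u_{\min}\|$ and the problem becomes a sharp $L^2$ estimate for $\bar\partial$ on the disk with datum $\overline{\varphi'}f$. This viewpoint also exposes why the estimate is subtle: a weighted bound of the shape $\|u_{\min}\|^2\le\tfrac12\int_{\mathbb{D}}|\varphi'|^2|f|^2\,dA$ is by itself too weak, since $\sup_{\|f\|=1}\int_{\mathbb{D}}|\varphi'|^2|f|^2\,dA$ is a Carleson-type quantity that in general exceeds $\|\varphi'\|_{A^2(\mathbb{D})}^2$. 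It is therefore essential to use the holomorphy of $f$ together with the exact projection $P$, and I expect both the factor $\tfrac12$ and the rigidity forcing $\varphi(\mathbb{D})$ to be a disk in the equality case to come from that interplay.
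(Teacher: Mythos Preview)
Your setup is correct and matches the paper's: the frequency decomposition of $\bar\varphi f$ into orthogonal bands indexed by $j=m-k$, with the negative-$j$ bands untouched by $P$ and the nonnegative ones carrying the projection terms, is exactly how the paper arrives at the two sums $(I)$ and $(II)$. You also correctly see that the monomial case is just the Example and that the real content lies in the cross terms $a_k\overline{a_{k'}}$, $k\ne k'$.

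The gap is that you have not produced the mechanism that yields the \emph{sharp} constant, and the mechanism you suggest is not the one that works. You propose to ``attack each band's quadratic form with Cauchy--Schwarz in the $k$-summation so as to dominate it by its diagonal entries.'' But as you yourself warn, a Cauchy--Schwarz majorization of the Hilbert-type kernel $\tfrac{1}{k+k'+j+1}$ loses the constant; and treating bands separately cannot succeed, because in the paper's argument the sharp $\tfrac12$ comes precisely from \emph{combining} the positive- and negative-frequency contributions after the inequality step. Concretely, the paper (following Olsen--Reguera) makes the substitution $a_n=(n+1)b_{n+1}$, which renders both $(I)$ and $(II)$ symmetric in the two summation indices $m,n$; then applies the single elementary inequality $2\operatorname{Re}(\alpha\bar\beta)\le|\alpha|^2+|\beta|^2$ termwise in that symmetry (not a Cauchy--Schwarz in $k$); and finally observes the exact combinatorial identity
\[
(I_*)+(II_*)=\sum_{n,m\ge1}|b_n|^2|c_m|^2\frac{nm}{2}=\tfrac12\|f\|_{A^2}^2\|\varphi'\|_{A^2}^2,
\]
where the two halves $(I_*)$ and $(II_*)$ interlock to cover each pair of indices exactly once. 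None of these three moves---the antiderivative substitution, AM--GM across $m$ and $n$ rather than Cauchy--Schwarz across $k$, and the exact resummation that fuses the two frequency ranges---appears in your outline, and without them the sharp constant is not reached. The $\bar\partial$ heuristic you mention is good motivation but, as you note, the H\"ormander-type bound it suggests is strictly weaker than what is needed. For the later equality analysis in this paper it is also essential that the \emph{only} inequality used is $2\operatorname{Re}(\alpha\bar\beta)\le|\alpha|^2+|\beta|^2$, so that equality forces the system $b_ic_{j+k}=b_jc_{i+k}$, $b_{i+k}c_j=b_{j+k}c_i$; a Cauchy--Schwarz route, even if it could be tightened to the right constant, would not deliver this rigidity in the same transparent form.
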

\begin{proof}
For the reader's convenience, we give here a sketch of their proof.
For full details, cf. \cite[§2]{Olsen}. For $f\in A^{2}(\mathbb{D}),$we
write $f(z)=\sum_{n\geq0}a_{n}z^{n}$, and without loss of generality
we assume that $\Vert f\Vert_{A^{2}(\mathbb{D})}=1$, and set $\varphi(z)=\sum_{k\geq1}c_{k}z^{k}$
(we can also assume without loss of generality that $\varphi(0)=0$).
The basic strategy is to calculate $H_{\bar{\varphi}}f$ in terms
of these Taylor coefficients and obtain the desired norm estimate
by working directly with the coefficients. Crucial to our purposes
is the fact that the only inequality used in \cite{Olsen} is the
arithmetic-geometric inequality $ab\leq\frac{a^{2}+b^{2}}{2}$.

First, by computing $P(\bar{\varphi}z^{n})$ for each $n$, we find
that 
\[
H_{\bar{\varphi}}f=\bar{\varphi}(z)f(z)-P(\overline{\varphi}f)(z)
\]
\begin{equation}
=\sum_{l\geq0}\sum_{n\geq0}\overline{c_{l}}a_{n}\overline{z}^{l}z^{n}-\sum_{n\geq1}\sum_{k=0}^{n-1}\frac{k+1}{n+1}a_{n}\overline{c_{n-k}}z^{k}.\label{eq:OperatorEqn}
\end{equation}
Then, after rewriting the above expression to take advantage of the
orthogonality, we let $z=re^{i\theta}$ and integrate the modulus
squared with respect to $\frac{d\theta}{\pi}$. This yields that $\Vert H_{\bar{\varphi}}f\Vert_{A^{2}(\mathbb{D})}^{2}$
is equal to
\begin{equation}
2\sum_{k\geq1}r^{2k}\vert\sum_{n\geq0}a_{n}\overline{c_{n+k}}r^{2n}\vert^{2}+2\sum_{k\geq0}r^{2k}\vert\sum_{n\geq k+1}a_{n}\overline{c_{n-k}}(r^{2(n-k)}-\frac{k+1}{n+1})\vert^{2}.\label{eq:Norm of H}
\end{equation}
This expression is once again rewritten and then integrated with respect
to $rdr$. If we set
\[
(I):=\sum_{n,m\geq1,k\geq0}\frac{a_{n}\overline{a_{m}}c_{k+m}\overline{c_{k+n}}}{n+m+k+1},
\]
\[
(II):=\sum_{k\geq0}\sum_{n,m\geq k+1}\frac{a_{n}\overline{a_{m}}c_{m-k}\overline{c_{n-k}}(m-k)(n-k)}{(n+1)(m+1)(n+m-k+1)},
\]
then we obtain that
\[
\Vert H_{\bar{\varphi}}f\Vert_{A^{2}(\mathbb{D})}^{2}=(I)+(II).
\]
Relabeling the indices slightly, and setting $a_{n}=b_{n+1}(n+1)$,
we find that
\[
(I)=\sum_{n,m\geq1,k\geq0}b_{n}\overline{b_{m}}c_{k+m}\overline{c_{k+n}}\frac{nm}{n+m+k},
\]
\[
(II)=\sum_{n,m,k\geq1}b_{n+k}\overline{b_{m+k}}c_{m}\overline{c_{n}}\frac{mn}{n+m+k}.
\]
Using the symmetry in $m$ and $n$ we may interpret each term as
being half that of its real part so that the inequality $2Re(ab)\leq\vert a\vert^{2}+\vert b\vert^{2}$
may be applied to each term of the above expressions, and this is
the only place where inequalities occur, which yields
\[
(I)\leq\sum_{n,m\geq1,k\geq0}\left(\vert b_{n}c_{k+m}\vert^{2}+\vert b_{m}c_{k+n}\vert^{2}\right)\frac{nm}{2(n+m+k)}=\sum_{n,m\geq1,k\geq0}\vert b_{n}c_{k+m}\vert^{2}\frac{nm}{n+m+k}=:(I_{*}),
\]
\[
(II)\leq\sum_{n,m,k\geq1}\left(\vert b_{n+k}c_{m}\vert^{2}+\vert b_{m+k}c_{n}\vert^{2}\right)\frac{nm}{2(n+m+k)}=\sum_{n,m,k\geq1}\vert b_{n+k}c_{m}\vert^{2}\frac{mn}{m+n+k}=:(II_{*}).
\]
By changing the order of summation properly with the goal of isolating
unique pairs of indices, we arrive at the expression
\[
(I_{*})+(II_{*})=\sum_{n,m\geq1}\vert b_{n}\vert^{2}\vert c_{m}\vert^{2}\frac{nm}{2}.
\]
Finally, replacing $a_{n}=b_{n+1}(n+1)$, we now see that the right
hand side exactly equals
\[
\frac{1}{2}\sum_{n,m\geq0}\vert b_{n}\vert^{2}\vert c_{m}\vert^{2}mn=\frac{1}{2}\left(\sum_{n\geq0}\frac{|a_{n}\vert^{2}}{n+1}\right)\left(\sum_{m\geq1}\vert c_{m}\vert^{2}m\right)=\frac{1}{2}\Vert f\Vert_{A^{2}(\mathbb{D})}^{2}\Vert\varphi'\Vert_{A^{2}(\mathbb{D})}^{2}.
\]
which was to be shown.\end{proof}
\begin{rem}
From here, the inequality 
\begin{equation}
\Vert[T_{\varphi}^{*},T_{\varphi}]\Vert\leq\frac{\Vert\varphi'\Vert_{A^{2}(\Omega)}^{2}}{2}\label{eq:Improved Putnam}
\end{equation}
is seen as a corollary by showing that if $\psi$ is the conformal
map from $\Omega$ to $\mathbb{D}$, then 
\[
\Vert[T_{\varphi}^{*},T_{\varphi}]\Vert_{A^{2}(\Omega)\rightarrow A^{2}(\Omega)}=\Vert H_{\bar{\varphi}}\Vert_{A^{2}(\Omega)\rightarrow L^{2}(\Omega)}^{2}=\Vert H_{\overline{\varphi\circ\psi}}\Vert_{A^{2}(\mathbb{D})\rightarrow L^{2}(\mathbb{D})}^{2},
\]
and thus we can apply Theorem 5 and the result follows. Again, refer
to \cite{Olsen} for more details. Taking $\varphi(z)=z$, and combining
(\ref{eq:Improved Putnam}) with the result of Bell, Ferguson, and
Lundberg, one arrives at a proof of the sharp St. Venant inequality
\begin{equation}
\rho(\Omega)\leq\frac{Area^{2}(\Omega)}{2\pi}.\label{eq:deStVenant}
\end{equation}
It should be noted that when $\varphi=z^{k}$ many of the terms in
(\ref{eq:Norm of H}) become zero resulting in the value we found
in Example 4 of $\frac{1}{2}$ rather than the Olsen-Reguera upper
bound of $\frac{k}{2}$. 
\end{rem}

\section{Unique extremality of the Disk}

We now show that from the proof of Theorem \ref{thmOlsen-Reguera},
we may deduce that equality is obtained in (\ref{eq:Improved Putnam})
and only if $\varphi(\Omega)$ is a disk. This will come as a corollary
to the following Theorem.
\begin{thm}
Suppose $\varphi(z)$ is analytic in $\mathbb{D}$ such that $\varphi(z)\in\mathscr{D}$,
the Dirichlet space. Further suppose that 
\[
\Vert[T_{\varphi}^{*},T_{\varphi}]\Vert_{A^{2}(\mathbb{D})\rightarrow A^{2}(\mathbb{D})}=\frac{\Vert\varphi'\Vert_{A^{2}(\Omega)}^{2}}{2}.
\]
Then $\varphi(\mathbb{D})$ is a disk. \end{thm}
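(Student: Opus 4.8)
The plan is to exploit the feature, stressed in the proof of Theorem~\ref{thmOlsen-Reguera}, that the \emph{only} inequality used there is the term-by-term arithmetic--geometric inequality $2\,\mathrm{Re}(a\bar b)\le |a|^2+|b|^2$, which is an equality exactly when $a=b$. Consequently $(I_*)-(I)$ and $(II_*)-(II)$ are sums of nonnegative quantities, one for each application of that inequality. If $\Vert[T_\varphi^*,T_\varphi]\Vert=\tfrac12\Vert\varphi'\Vert^2$ is \emph{attained} by some $f$, then for that $f$ we have $(I)+(II)=(I_*)+(II_*)$, forcing every one of those quantities to vanish. Writing $f(z)=\sum_{n\ge 0}a_nz^n$ with $a_n=b_{n+1}(n+1)$ and $\varphi(z)=\sum_{k\ge 1}c_kz^k$ as in that proof, the vanishing of the $(I)$-defects reads
\[
b_n c_{k+m}=b_m c_{k+n}\qquad(n,m\ge 1,\ k\ge 0),
\]
with an analogous family coming from $(II)$.

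The first thing I would settle --- and what I expect to be the main obstacle --- is the attainment of the norm, since the rigidity above needs a genuine maximizer rather than a maximizing sequence (which could a priori escape weakly to $0$). Here I would use that $[T_\varphi^*,T_\varphi]=H_{\bar\varphi}^*H_{\bar\varphi}$, so it is enough that $H_{\bar\varphi}$ be compact; then this positive operator attains its norm at a nonzero top eigenvector $f$. The hypothesis $\varphi\in\mathscr{D}$ gives $\varphi'\in A^2(\mathbb{D})$, and the reproducing-kernel identity shows $(1-|z|^2)|\varphi'(z)|$ is a fixed multiple of $|\langle\varphi',k_z\rangle|$, where $k_z$ is the normalized Bergman kernel; since $k_z\to0$ weakly as $|z|\to1$, this tends to $0$, so $\varphi$ lies in the little Bloch space. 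By the classical characterization of compactness for big Hankel operators with antiholomorphic symbol (cf.\ \cite{Axler}), $H_{\bar\varphi}$ is then compact and a maximizer $f$ exists.

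With $f$ in hand the algebra is routine. Setting $k=0$ in the $(I)$-relation gives $b_nc_m=b_mc_n$, so $(b_n)$ and $(c_n)$ are proportional; as $f\ne0$ this forces $f=\lambda\varphi'$ for some $\lambda\ne0$, i.e.\ the extremizer is a multiple of $\varphi'$. Substituting $b_n=\lambda c_n$ into the general $(I)$-relation reduces everything to a functional equation in the coefficients of $\varphi$ alone,
\[
c_n c_{k+m}=c_m c_{k+n}\qquad(n,m\ge 1,\ k\ge 0).
\]
The delicate point --- and the reason the monomials $\varphi=z^N$ with $N\ge2$ are correctly excluded although their image is a disk --- is that this equation forces $c_1\ne0$: if $N\ge2$ were the least index with $c_N\ne0$, then $n=N,\,m=1$ gives $c_Nc_{k+1}=c_1c_{k+N}=0$ for all $k$, whence $\varphi\equiv0$. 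Granting $c_1\ne0$ and putting $m=1$ yields $c_{k+n}=(c_{k+1}/c_1)c_n$; the quantities $g_k:=c_{k+1}/c_1$ satisfy $g_{k+k'}=g_kg_{k'}$, so $g_k=r^k$ with $r=c_2/c_1$, and hence $c_j=c_1r^{\,j-1}$ is geometric.

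These coefficients sum to the Möbius transformation $\varphi(z)=\dfrac{c_1z}{1-rz}$. The requirement $\varphi\in\mathscr{D}$ makes $\Vert\varphi'\Vert^2=|c_1|^2\sum_{j\ge1}j\,|r|^{2(j-1)}$ finite, so $|r|<1$; the pole $1/r$ then lies outside $\overline{\mathbb{D}}$, $\varphi$ is analytic and univalent on $\mathbb{D}$, and a linear fractional map sends the circle $\partial\mathbb{D}$ to a circle. Therefore $\varphi(\mathbb{D})$ is a disk, as required. (One can check that for such $\varphi$ the $(II)$-relations hold automatically and $\varphi'$ really is a maximizer, so these Möbius maps are exactly the equality cases, consistent with the disk being extremal.)
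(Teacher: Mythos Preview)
Your proof is correct and follows the same overall strategy as the paper: use compactness of $H_{\bar\varphi}$ (from $\varphi\in\mathscr D$) to secure a maximizer, then exploit that the only inequality in the Olsen--Reguera argument is the termwise $2\,\mathrm{Re}(A\bar B)\le|A|^2+|B|^2$, so equality forces $b_nc_{k+m}=b_mc_{k+n}$ for all $n,m\ge1,\ k\ge0$ (and the analogous $(II)$-relations).

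Where your argument differs is in the analysis of these coefficient relations, and your route is tidier. The paper proceeds by a two-case elimination, working with both sequences $(b_n)$ and $(c_n)$ simultaneously: first treating $\varphi$'s whose Taylor series has a zero coefficient following two nonzero ones, then treating those with an infinite nonzero tail that is not geometric, and in each case deducing $b_i\equiv0$. You instead observe immediately from $k=0$ that $b_nc_m=b_mc_n$, hence $(b_n)=\lambda(c_n)$ and the extremizer is $\lambda\varphi'$; substituting back collapses everything to the single functional equation $c_nc_{k+m}=c_mc_{k+n}$ on the symbol alone, from which $c_1\neq0$ and the geometric progression $c_j=c_1r^{j-1}$ fall out directly. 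This identifies the extremal symbols explicitly as the M\"obius maps $c_1z/(1-rz)$ with $|r|<1$, which is a bit more information than the paper extracts. Your treatment also handles the monomial case more cleanly: the paper asserts that for $\varphi=cz^K$ the relations \eqref{eq:1}--\eqref{eq:2} are ``trivially satisfied'' by $f=\psi_{K-1}$, but in fact for $K\ge2$ one has, e.g., $b_Kc_K\neq0=b_{K-1}c_{K+1}$, so $(I)<(I_*)$ strictly; your argument correctly shows that the relations themselves force $c_1\neq0$, so $z^K$ with $K\ge2$ is excluded at the level of the equality conditions (consistent with Example~4, where the norm is $1/2<K/2$).
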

\begin{proof}
Since $\varphi\in\mathscr{D},$ $H_{\overline{\varphi}}$ is compact
(cf.\cite[§7.4, p.145]{Zhu}), and so attains its norm on $A_{1}^{2}(\mathbb{D})$.
Recall from the proof of Theorem 2 that 
\[
\Vert[T_{\varphi}^{*},T_{\varphi}]\Vert_{A^{2}(\mathbb{D})\rightarrow A^{2}(\mathbb{D})}=\left(\sup_{f\in A_{1}^{2}(\mathbb{D})}\Vert\varphi f\Vert_{L^{2}(\mathbb{D})}^{2}-\Vert P(\bar{\varphi}f)\Vert_{A^{2}(\mathbb{D})}^{2}\right)
\]
 
\[
=\Vert H_{\bar{\varphi}}\Vert_{A^{2}(\mathbb{D})\rightarrow L^{2}(\mathbb{D})}^{2}.
\]
We now examine the proof of Theorem \ref{thmOlsen-Reguera} to find
exactly when equality may happen. Recall that if $f\in A_{1}^{2}(\mathbb{D})$,
then 
\[
\Vert H_{\bar{\varphi}}f\Vert_{A^{2}(\mathbb{D})}^{2}=(I)+(II)\leq(I_{*})+(II_{*})=\frac{1}{2}\Vert f\Vert_{A^{2}(\mathbb{D})}^{2}\Vert\varphi'\Vert_{A^{2}(\mathbb{D})}^{2},
\]
where $(I),$ $(II)$, $(I_{*})$, and $(II_{*})$ are as in Theorem
\ref{thmOlsen-Reguera}. The only inequality at work here is $2Re(ab)\leq\vert a\vert^{2}+\vert b\vert^{2}$,
where equality is achieved if, and only if, $a=\bar{b}.$ Thus we
find that equality is achieved if $(I)=(I_{*})$ and $(II)=(II_{*})$,
which will only happen if the following infinite system of equations
is satisfied:
\begin{equation}
b_{i}c_{j+k}=b_{j}c_{i+k}\qquad i,j\geq1,k\geq0,\label{eq:1}
\end{equation}
\begin{equation}
b_{i+k}c_{j}=b_{j+k}c_{k}\qquad i,j,k\geq1,\label{eq:2}
\end{equation}
where $\varphi(z)=\sum_{k\geq1}c_{k}z^{k}$ is given and $f(z)=\sum_{n\geq1}nb_{n}z^{n-1}$
is an extremal function in $A_{1}^{2}(\mathbb{D})$ such that the
above equations are satisfied. 

It is clear that if $c_{k}=0$ for all but a single $k$, that is
if $\varphi(z)=cz^{k}$ then the above equations can be satisfied
by a non-zero $f\in A_{1}^{2}(\mathbb{D})$. In fact, we know from
Example 4 that if we take $f=\psi_{k-1}$, then (\ref{eq:1}) and
(\ref{eq:2}) will be trivially satisfied. As we remarked above, in
this case the formula (\ref{eq:Norm of H}) is oversimplified, so
that the resulting norm is $\frac{c^{2}}{2}$ instead of our expected
upper bound of $\frac{c^{2}k}{2}$ It is also clear that the above
equations are satisfied when $\varphi(z)=\sum_{k\geq1}r^{k}z^{k}$
for some $r<1$. Here, the extremal $f=\frac{1}{\Vert\varphi\Vert_{A^{2}(\mathbb{D})}}\sum_{k\geq0}r^{k}z^{k}$.
In both cases $\varphi(\mathbb{D})$ is a disk. 

We will now show that for all other $\varphi$, (\ref{eq:1}) and
(\ref{eq:2}) only hold for $f\equiv0$. We will do this by looking
at two cases. 

First suppose that $\varphi(z)$ has at least two non-zero Taylor
coefficients, $c_{m}$, $c_{n}$, with $m<n$, and at least one zero
coefficient $c_{k_{0}}$ such that $k_{0}>n$. This encompasses all
Taylor series which do not have an infinite non-zero tail. Without
loss of generality we can assume that $k_{0}=n+1$ by taking $c_{k_{0}}$
to be the first zero coefficient after at least two non-zero coefficients.
We now assume that we have found an $f\in A_{1}^{2}(\mathbb{D})$
whose Taylor coefficients satisfy (\ref{eq:1}) and (\ref{eq:2}).
By (\ref{eq:2}), we have that 
\begin{equation}
b_{n+k}c_{m}=b_{m+k}c_{n}\qquad k\geq1,\label{eq:3}
\end{equation}
\begin{equation}
b_{n+k+1}c_{m}=b_{m+k}c_{n+k+1}\qquad k\geq1.\label{eq:4}
\end{equation}
Hence, we can conclude that $b_{j}=0$ for all $j\geq n+2$ by (\ref{eq:4}),
which implies that $b_{m+k}=0$ for all $k\geq2$ by (\ref{eq:3}).
We now let $i=m+1,\: j=m$ and choose $k$ such that $m+k=n$. Then
by (\ref{eq:1}) we have that 
\[
b_{m+1}c_{m+k}=b_{m+1}c_{n}=b_{m}c_{m+1+k}=b_{m}c_{n+1}=0,
\]
which, shows that $b_{m+1}=0$.

Now choosing $i<m+1$, $j=m+1$ and choosing $k$ such that $m+1+k=n$,
then by (\ref{eq:1}) we have that 
\[
b_{i}c_{m+1+k}=b_{i}c_{n}=b_{m+1}c_{n+k}=0.
\]
Hence, we have that in fact $b_{i}=0$ for all $i\geq1,$ which means
that $f\equiv0$.

Suppose now instead, that $\varphi(z)$ is such that its Taylor series
does have an infinite non-zero tail, but the coefficients do not exhibit
a geometric progression. This means that we can find three non-zero
coefficients, $c_{m}$, $c_{m+1}$, and $c_{m+2}$ such that
\begin{equation}
\frac{c_{m}}{c_{m+1}}\neq\frac{c_{m+1}}{c_{m+2}}.\label{eq:5}
\end{equation}
By (\ref{eq:2}), we have that
\begin{equation}
b_{m+k}c_{m+1}=b_{m+1+k}c_{m}\qquad k\geq1,\label{eq:6}
\end{equation}
\begin{equation}
b_{m+1+k}c_{m+2}=b_{m+2+k}c_{m+1}\qquad k\geq1.\label{eq:7}
\end{equation}
In particular, choosing $k=2$ in (\ref{eq:6}) and $k=1$ in (\ref{eq:7})
we have that 
\[
b_{m+2}c_{m+1}=b_{m+3}c_{m},
\]
 and 
\[
b_{m+2}c_{m+2}=b_{m+3}c_{m+1},
\]
which by (\ref{eq:5}) means that $b_{m+2}=b_{m+3}=0$. In fact, the
same argument shows that $b_{j}=0$ for all $j\geq m+2$. But then
of course, by (\ref{eq:6}) we immediately get that $b_{j}=0$ for
all $j\geq m+1$. Now once again simply let $i<m+1,$ $j=m+1$, and
$k=1$, and then by (\ref{eq:1}) we once again have that $b_{i}=0$
for all $i\geq1$, and so $f\equiv0.$ 
\end{proof}
Our result now follows as a corollary.
\begin{cor}
\label{cor:StVenantEquality}$\Vert[T_{z}^{*},T_{z}]\Vert_{A^{2}(\Omega)\rightarrow A^{2}(\Omega)}=\frac{Area(\Omega)}{2\pi}$
if, and only if, $\Omega$ is a disk. \end{cor}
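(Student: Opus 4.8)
The plan is to push the whole problem to the unit disk via the Riemann map and then read the statement off the preceding Theorem. Let $\varphi:\mathbb{D}\to\Omega$ be a conformal map onto the bounded Jordan domain $\Omega$. Because $\Omega$ is bounded, the change-of-variables identity $\int_{\mathbb{D}}|\varphi'|^2\,dA = Area(\varphi(\mathbb{D})) = Area(\Omega) < \infty$ (the Jacobian of $\varphi$ being $|\varphi'|^2$) shows $\varphi'\in A^2(\mathbb{D})$, i.e.\ $\varphi\in\mathscr{D}$; this is exactly the regularity required to apply the preceding Theorem. By Theorem 2 the two self-commutator norms agree, $\Vert[T_z^*,T_z]\Vert_{A^2(\Omega)\to A^2(\Omega)} = \Vert[T_\varphi^*,T_\varphi]\Vert_{A^2(\mathbb{D})\to A^2(\mathbb{D})}$, and the same change of variables identifies $\Vert\varphi'\Vert_{A^2(\mathbb{D})}^2$ with $Area(\Omega)/\pi$, so that the Olsen--Reguera bound (\ref{eq:Improved Putnam}) reads $\Vert[T_\varphi^*,T_\varphi]\Vert \le \Vert\varphi'\Vert_{A^2(\mathbb{D})}^2/2 = Area(\Omega)/(2\pi)$.

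With this dictionary the ``only if'' direction is immediate. Suppose $\Vert[T_z^*,T_z]\Vert_{A^2(\Omega)} = Area(\Omega)/(2\pi)$. Chaining the two displayed equalities with the inequality above forces $\Vert[T_\varphi^*,T_\varphi]\Vert_{A^2(\mathbb{D})} = \Vert\varphi'\Vert_{A^2(\mathbb{D})}^2/2$, i.e.\ $\varphi$ is extremal for the improved Putnam inequality. The preceding Theorem then concludes that $\varphi(\mathbb{D})$ is a disk, and since $\varphi(\mathbb{D}) = \Omega$ we are done. Note that the univalence of the Riemann map automatically excludes the non-univalent extremal candidates $\varphi = cz^k$ with $k\ge 2$ produced in that Theorem, so no ambiguity remains.

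For the ``if'' direction I would check a disk directly. Writing $\Omega = \{\,|w-a| < R\,\}$ and choosing the affine conformal map $\varphi(z) = a + Rz$, one has $T_\varphi = aI + RT_z$ and hence $[T_\varphi^*,T_\varphi] = R^2[T_z^*,T_z]$ on $A^2(\mathbb{D})$; the case $k=1$ of Example 4 gives $\Vert[T_z^*,T_z]\Vert_{A^2(\mathbb{D})} = 1/2$, whence $\Vert[T_z^*,T_z]\Vert_{A^2(\Omega)} = R^2/2 = Area(\Omega)/(2\pi)$. Equivalently, this is the sharpness of the Olsen--Reguera inequality for disks recorded in the Introduction.

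I do not expect a genuine obstacle: the analytic work is entirely contained in the preceding Theorem, and the corollary is an assembly step. The only point demanding care is normalization bookkeeping --- one must keep the same convention for the area measure on $\mathbb{D}$ that is in force when stating (\ref{eq:Improved Putnam}) (the proof of Theorem 5 uses the normalized measure, under which $\Vert\varphi'\Vert_{A^2(\mathbb{D})}^2 = Area(\Omega)/\pi$), so that the constant $1/2$ in the bound turns into the claimed $1/(2\pi)$ via the Jacobian identity. Getting this factor of $\pi$ right is precisely what links the abstract extremality result to the geometric statement about $Area(\Omega)$.
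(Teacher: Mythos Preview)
Your argument is correct and follows the paper's own route: transfer to $\mathbb{D}$ via Theorem~2 and invoke the preceding Theorem on extremality of (\ref{eq:Improved Putnam}); the paper's proof consists of exactly these two citations. Your write-up is simply more explicit --- you verify $\varphi\in\mathscr{D}$, check the ``if'' direction from Example~4, and track the $\pi$ in the area-measure normalization --- all of which the paper leaves implicit.
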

\begin{proof}
By Theorem 2, 
\[
\Vert[T_{z}^{*},T_{z}]\Vert_{A^{2}(\Omega)\rightarrow A^{2}(\Omega)}=\Vert[T_{\varphi}^{*},T_{\varphi}]\Vert_{A^{2}(\mathbb{D})\rightarrow A^{2}(\mathbb{D})}
\]
 where $\varphi$ is the conformal map from $\mathbb{D}$ onto the
simply connected domain $\Omega$. The corollary now immediately follows
from Theorem 6.\end{proof}
\begin{rem*}
Just as the results in \cite{Olsen} and \cite{Bell} can be combined
to give a new proof of the St. Venant inequality, corollary \ref{cor:StVenantEquality}
gives another proof that equality in the St. Venant inequality characterizes
disks. 
\end{rem*}

\section{Concluding Remarks}

It must be noted that Olsen-Reguera proof only applies to Toeplitz
operators whose symbol is in the Dirichlet space, and the upper bound
is in terms of $\Vert\varphi'\Vert_{A^{2}}$ rather than the area
of $\varphi(\mathbb{D}).$ Example 4 however leads us to believe that
the upper bound in terms of the area $\varphi(\mathbb{D})$ without
multiplicity, i.e. the spectrum of $T_{\varphi}$ is, perhaps, the
correct one. If so, the result should be extendable to all Toeplitz
operators with bounded analytic symbol. One of the consequences of
the Olsen-Reguera result is a unique sharp upper bound on all extremal
problems of the type
\[
\sup_{u\in(A^{2}(\mathbb{D}))^{\perp},\|u\Vert\leq1}\vert\int_{\mathbb{D}}\bar{\varphi}udA\vert
\]
since by a standard duality argument, we have that
\[
\inf_{f\in A^{2}(\mathbb{D})}\Vert\bar{\varphi}-f\Vert_{L^{2}(\mathbb{D})}=\sup\{\vert\int_{\mathbb{D}}\bar{\varphi}udA\vert:\: u\in(A^{2}(\mathbb{D}))^{\perp},\:\|u\Vert\leq1\}
\]
(cf. \cite[§4]{Guadarrama}). We believe examining extremal problems
of this type could very well lead to a proof of the Olsen-Reguera
result which doesn't depend on power series calculations. It would
be interesting to be able to determine the sharp ``isoperimetric''
bounds for these types of extremal problems, similar to Putnam's inequality
and \cite{KhavLueking}. It would also be worthwhile to investigate
finitely connected domains. While the result of Olsen-Reguera is certainly
applicable, it would be nice to establish the deficiency based on
the number of boundary components, as in \cite{KhavLueking}. Finally,
it would be interesting to see which, if any, other so-called ``isoperimetric
sandwiches'' could be expressed in terms of $[T^{*},T]$ acting on
other function spaces, e.g. the Dirichlet space.


\begin{thebibliography}{10}
\bibitem[1]{Alexander}Alexander, H., \emph{Projection of polynomial
hulls, }J. Funct. Anal. 3 (1973), pp 13-19. 

\bibitem[2]{Axler}Axler, S., Shapiro, J.H., \emph{Putnam's theorem,
Alexander's spectral area estimate, and VMO}, Math. Ann. 271 pp 161-183,
1985.

\bibitem[3]{Bell}Bell, S., Ferguson, T., Lundberg, E., \emph{Self-commutators
of Toeplitz operators and isoperimetric inequalities, }arXiv:1211.6937,
2012

\bibitem[4]{Duren1}Duren, P. \emph{Theory of $H^{p}$ Spaces, }Academic
Press, New York, 1970.

\bibitem[5]{Duren2}Duren, P., Schuster, A., \emph{Bergman Spaces},
Mathematical Surveys and Monographs v.100, 2004.

\bibitem[6]{Gamelin}Gamelin, T.W., Khavinson, D., \emph{The isoperimetric
inequality and rational approximation, }The American Mathematical
Monthly, Vol. 96, No. 1 (Jan., 1989), pp 18-30.

\bibitem[7]{Guadarrama}Guadarrama, Z., Khavinson, D., \emph{Approximating
$\bar{z}$ in Hardy and Bergman norms, }Banach Spaces of Analytic
Function, pp. 43-61, Contemp. Math., 454, Amer. Math. Soc., Providence,
RI, 2008. 

\bibitem[8]{Khavinson}Khavinson, D., \emph{A note on Toeplitz operators,
}Banach spaces (Columbia, Mo., 1984) Lecture Notes in Math., vol.
1166, Springer, Berlin, 1985, pp. 89-94.

\bibitem[9]{KhavLueking}Khavinson, D., Lueking, D., \emph{On an extremal
problem in the theory of rational approximation,} Journal of Approximation
Theory 50, pp127-137, 1987. 

\bibitem[10]{Olsen}Olsen, J-F., Reguera, M.C., \emph{On a sharp estimate
for Hankel operators and Putnam's inequality,} arXiv:1305.5193v1,
2013

\bibitem[11]{Polya}Pólya, G., Szegö, G., \emph{Isoperimetric Inequalities
in Mathematical Physics}, Annals of Mathematics Studies, no. 27, Princeton
University Press, Princeton, N. J., 1951. 

\bibitem[12]{Putnam}Putnam, C.R., \emph{An inequality for the area
of hyponormal spectra}. Math.Z. 116, 323-330, 1970.

\bibitem[13]{Rudin}Rudin, W., \emph{Functional Analysis}, McGraw-Hill,
1973. 

\bibitem[14]{Zhu}Zhu, K., \emph{Operator Theory in Function Spaces},
Mathematical Surveys and Monographis v.138, 1990.\end{thebibliography}
\end{document}